\DeclareMathOperator*{\argmin}{arg\,min}
\DeclareMathOperator{\conv}{conv}
\DeclareMathOperator{\dist}{dist}
\newcommand{\innp}[2]{\left\langle #1, #2 \right\rangle}
\newcommand{\norm}[1]{\left\| #1 \right\|}
\newcommand{\vx}{\mathbf{x}}
\newcommand{\vvv}{\mathbf{v}}
\newcommand{\vy}{\mathbf{y}}
\begin{document}
\title{Graph Isomorphism: Mixed-Integer Convex Optimization from First-Order Methods}
\titlerunning{Graph isomorphism with mixed-integer convex optimization}
% If the paper title is too long for the running head, you can set
% an abbreviated paper title here
%
\author{Wenjie Xiao \inst{1,2} \and
Mathieu Besançon\inst{4,1} \and
Patrick Gel\ss\inst{2} \and
Deborah Hendrych\inst{1,2} \and
Stefan Klus\inst{3} \and
Sebastian Pokutta\inst{1,2}}
%s
\authorrunning{W. Xiao et al.}
% First names are abbreviated in the running head.
% If there are more than two authors, 'et al.' is used.
%
\institute{Zuse Institute Berlin, Berlin, Germany \and
Technische Universität Berlin, Berlin, Germany \and
Heriot--Watt University, Edinburgh, UK \and
Univ.~Grenoble Alpes, Inria, CNRS, LIG, Grenoble, France}
\maketitle              % typeset the header of the contribution
\begin{abstract}
The \textit{graph isomorphism (GI) problem}, which asks whether two graphs are structurally identical,
occupies a unique position in computational complexity---it is neither known to be solvable in polynomial time, nor proven to be NP-complete.
We propose a convex mixed-integer formulation of the problem and leverage first-order convex optimization to tackle it,
following a stream of recent work on optimization-driven graph isomorphism detection. We strengthen our formulation with variable fixing techniques that prove highly effective while preserving the polyhedral structure.
We perform extensive computations evaluating the performance of different families of methods including a mixed-integer 
convex formulation, mixed-integer linear optimization, local search and spectral heuristics over a collection of challenging GI instances.
We find that a high level of symmetry is beneficial for optimization-based methods. On the other hand, presolving 
techniques that detect local substructures to fix variables are crucial for asymmetric instances.
The proposed method outperforms the second best approach, the integer feasibility approach, on 6 of the 12 graphs families and is on par with it on symmetric families.

\keywords{Graph Isomorphism  \and Frank--Wolfe Method \and Combinatorial Optimization.}
\end{abstract}
\section{Introduction}

Graphs are versatile representations of complex systems and appear in many applications ranging
from social networks, biology to engineering.
A question of interest in many of these fields is to determine whether two graphs are structurally identical (identical up to vertex ordering), i.e.~whether they are \emph{isomorphic}.
This is called the \emph{Graph Isomorphism problem} (GI) \cite{read1977graph,zemlyachenko1985graph,babai2018group}.
While it is not known whether the problem can be solved in polynomial time, for certain classes of graphs, there are polynomial-time algorithms \cite{hopcroft1972isomorphism,luks1982isomorphism,babai1982isomorphism}.
For an overview and the historic background, we refer the reader to \cite{GI_relaxation}.

A powerful off-the-shelf solver for GI is \texttt{nauty} \cite{mckay2014practical},
originally developed in the 1980s \cite{mckay1981practical}.
The solver computes the automorphism group of a graph which can be utilized to efficiently check if two graphs are isomorphic.
\texttt{nauty} is the state of the art and very fast by, in particular, detecting different graph structures and employing tailored preprocessing algorithms.

In this paper, we investigate optimization formulations and algorithms for the GI problem which can then be embedded within a larger problem.
This work proposes a new optimization-based method to compute isomorphisms or certify that none exists,
leveraging in particular first-order methods similar to \cite{klus2025continuous} to solve an optimization formulation of GI, but instead build an exact method rather than a heuristic.
We utilize the Boscia framework \cite{hendrych2023convex}, a mixed-integer convex optimization framework based on Frank--Wolfe methods, and adapt the solving process to the structure of the GI problem.
In particular, we exploit the formulation of the problem as seeking the minima of a function on the Birkhoff polytope, for which we have access to specialized algorithms for linear assignment.
Additionally, we utilize the detection of local substructures to fix variables as a preprocessing step.
Note that Boscia does not include any preprocessing out-of-the-box as it assumes only oracles access to the objective, its
gradient and the feasible region. 
We compare this approach to the method proposed in \cite{klus2025continuous}, a difference-of-convex algorithm (DCA) approach for which FW methods have recently been studied,
integer feasibility and minimization problems and a heuristic spectral method proposed in \cite{KS18} on a benchmark of selected graph families from the \texttt{nauty} library.
The goal is to computationally assess graph properties that render the problem easy or challenging for the different methods.
High symmetric is beneficial for optimization-based methods as it increases the number of possible solutions.
The proposed variable fixing techniques are crucial for asymmetric graphs as they greatly reduce the size of the 
optimization problem.

The optimization-based approaches are not expected to compete with specialized GI solvers.
However, constrained and inexact variants of the problem appear in many applications, akin to the Quadratic Assignment Problem (QAP).
These can be handled by optimization methods but not by the specialized solvers.
Furthermore, the ideas and formulations we investigate can tackle graphs of up to 500 nodes (250000 variables) despite using generic optimization techniques, showing a high potential for their integration within specialized GI solvers.
While focusing on the GI problem for unweighted undirected graphs, the proposed methods can be naturally extended to weighted and/or directed graphs and to the graph matching problem
in which one seeks the permutation that minimizes the distance between two graphs.
In contrast, \texttt{nauty} does only support directed graphs through graph expansions and weighted graphs have to be presented in an unweighted fashion, greatly increasing problem size.

\subsection*{Notation}

Throughout this work, we use the following notation:
A function $f$ is $L$-smooth if its gradient $\nabla f$ is $L$-Lipschitz continuous over its domain.
We denote the standard inner product as $\innp{\cdot}{\cdot}$,
the Euclidean norm of a vector as $\norm{\cdot}$ and the Frobenius norm of a matrix as $\norm{\cdot}_F$.
Matrices are denoted by uppercase letters, vectors are lowercase bold, scalars are lowercase letters.
The convex hull of a set $\mathcal{X}$ is denoted by $\conv(\mathcal{X})$.
The $n \times n$ identity matrix is denoted as $I_n$, the all-one vector as $\mathbf{1}$.
The set of permutation matrices is denoted by $\mathcal{P}_n$ and its convex hull, the Birkhoff polytope or set of doubly stochastic matrices, is denoted by $\mathcal{D}_n$.

\section{Graph isomorphism problem}
Given two graphs \(G_1 = (V, E_1)\) and \(G_2 = (V, E_2)\) on a common vertex set
\(V\) with \(|V| = n\), the graphs are said to be \emph{isomorphic} if there exists a
bijection \(\pi : V \to V\) such that two vertices \(u, v \in V\) are adjacent in \(G_1\)
if and only if \(\pi(u)\) and \(\pi(v)\) are adjacent in \(G_2\). In other words, an
isomorphism is a relabeling of the vertices that preserves adjacency.

Let \(A\) and \(B\) be the adjacency matrices of \(G_1\) and \(G_2\), respectively.
A vertex bijection corresponds to a permutation matrix \(P \in \mathcal{P}_n\), and
the graphs are isomorphic if and only if $P A P^\top = B$, which can be rewritten as
\(PA = BP\). The GI problem thus consists in
finding a permutation matrix that satisfies this equation. This viewpoint
naturally casts the GI problem as a feasibility problem over the discrete set
\(\mathcal{P}_n\).

To place this feasibility problem within an optimization framework, one may
instead measure the violation of the matching constraint and search for a permutation
that minimizes it. This leads to the quadratic formulation
\[
    \min_{P \in \mathcal{P}_n} \; \| PA - BP \|_F^2,
\]
whose optimal value is zero precisely when the matching condition is satisfied.
The objective is convex in $P$
but the feasible set \(\mathcal{P}_n\) is combinatorial.

Additionally, the presence of graph symmetries may give rise to many distinct permutations
that achieve the same optimal value. These challenges motivate the use of
continuous relaxations, which can provide certificates of non-isomorphism or
serve as effective initializations for discrete methods.

A widely used relaxation replaces \(\mathcal{P}_n\) by its convex hull, the set of doubly stochastic matrices, by the
Birkhoff--von Neumann theorem \cite{Birkhoff_theorem}:
\begin{align*}
    \mathcal{D}_n
    :=
    \left\{
        X \in \mathbb{R}^{n \times n}
        \;\middle|\;
        X\mathbf{1} = \mathbf{1},\;
        X^\top \mathbf{1} = \mathbf{1},\;
        X \ge 0
    \right\}.
\end{align*}
The relaxed problem
\[\min_{X \in \mathcal{D}_n} \; \| X A - B X \|_F^2\]
is therefore a continuous relaxation of the GI problem and can be approached using
convex constrained optimization techniques.

This relaxation is tight for certain classes of graphs, such as asymmetric graphs or
graphs satisfying additional structural conditions \cite{GI_relaxation}, but may admit
fractional optimal solutions for other graph families, which leaves the existence of a permutation with a zero objective undecided.

\section{Integer optimization with Frank--Wolfe}

In this section, we present the method developed for the GI problem based on a first-order method and a branch-and-bound algorithm tailored around it.

\subsection{Frank--Wolfe methods}

The Frank--Wolfe (FW) \cite{FW} or conditional gradient method \cite{CG} is a first-order algorithm for constrained convex optimization.
Given a convex, $L$-smooth function \(f\) and a compact convex feasible region \(\mathcal{X}\),
the method exploits a \emph{linear minimization oracle} (LMO) over the feasible region, which solves
\begin{align*}
\vvv_t \in \arg\min_{\vy \in \mathcal{X}} \langle \vy, \nabla f(\vx_t) \rangle.
\end{align*}
In particular, FW algorithms do not require projections nor algebraic representations of the constraints.
The oracle returns an extreme point of \(\mathcal{X}\) that minimizes the linearization
of \(f\) at the current iterate \(\vx_t\).
The standard FW algorithm is detailed in Algorithm~\ref{alg:fw}.
\begin{wrapfigure}[8]{L}{0.5\textwidth}
\vspace{-1.8\intextsep}
\begin{minipage}[t]{0.5\textwidth}
\begin{algorithm}[H]
\caption{Frank--Wolfe Algorithm}
\label{alg:fw}
\begin{algorithmic}[1]
\State \textbf{Input:} Initial feasible point \( \vx_0 \in \mathcal{X} \), step sizes $\{\gamma_t\}_{t \geq 0}$
\For{$t = 0, \ldots, T$}
\State \( \vvv_t := \argmin_{\vy \in \mathcal{X}} \langle \vy, \nabla f(\vx_t) \rangle \)
\State \( \vx_{t+1} := \vx_t - \gamma_t (\vx_t - \vvv_t) \)
\EndFor
\end{algorithmic}
\end{algorithm}
\end{minipage}
\end{wrapfigure}

The sequence produces feasible iterates without requiring orthogonal projections onto
\(\mathcal{X}\), which can be computationally expensive.
In contrast, the LMO is often significantly cheaper, especially for
polytopes where linear optimization reduces to selecting an appropriate extreme point.
In particular, the Birkhoff polytope admits a specialized implementation of the LMO using the Hungarian algorithm \cite{kuhn1955hungarian,munkres1957algorithms}.
Several methods have enhanced FW in important classes of problems;
we refer readers to the survey \cite{braun2022conditional} for a detailed overview.
We highlight in particular the \emph{Blended Pairwise Conditional Gradient} (BPCG) method \cite{tsuji2021sparser}, which produces sparse iterates by maintaining the current iterate as a sparse convex decomposition of vertices,
and the \emph{Decomposition Invariant Conditional Gradient} (DICG)
\cite{DICG} which removes the need to maintain the active set, by requiring instead an oracle for minimizing functions on particular faces of the feasible region.
BPCG can benefit from so-called \emph{lazification}: one can replace the exact LMO at many iterations by a heuristic providing a vertex with sufficient decrease, see \cite{braun2017lazifying,braun2022conditional},
while DICG performs linear optimization on specific faces of the feasible region.
In particular for the Birkhoff polytope, the Hungarian algorithm can be adapted to efficiently compute the LMO
restricted to a face, which for the Birkhoff polytope corresponds to the same changes performed when branching.
If an entry can be fixed to one, the corresponding row and column can be removed since all other entries have to be zero.
One can also set an entry to zero by removing the edge in the bipartite graph of the linear assignment problem.
Since the dimensions of the problem grow quadratically with the number of nodes, this is a significant reduction
in terms of computational cost compared to utilizing BPCG.
While the worst-case complexity at the root-node remains identical, we highlight that the Hungarian algortithm becomes more tractable when fixed to faces
when using specific data structures \cite{fredman1987fibonacci}.

BPCG has linear convergence under certain conditions, for example under a $1/2-$sharp objective and polyhedral constraints.
The constants of the convergence rate can however result in a dependency on the ambient dimension which could result in a slow rate in practice;
we therefore specify these constants to compute an explicit convergence rate for the convex relaxations we solve depending only on the problem data (i.e., the adjacency matrices).
This rate can then be linked with the cost per iteration dominated by the $\mathcal{O}(n^3)$ Hungarian algorithm.

The original definition of the \emph{pyramidal width} of a polytope is given in \cite{lacoste2015global},
several equivalent definitions were derived in \cite{pena2019polytope} and are typically easier to manipulate and derive for specific polytopes.
We use the facial distance shown to be equal to the pyramidal width \cite{pena2019polytope}.

\begin{definition}[Pyramidal Width \cite{pena2019polytope}]
For any polytope $\mathcal{X}$ with vertex set $\mathcal{V}$, its pyramidal width $\delta$ is equal to the minimum distance of any of its proper faces $F$
to the convex hull of the vertices of $\mathcal{X}$ not lying on $F$. That is:
\begin{equation}
\label{eq:pyramidal_width}
\delta = \min_{F \in \mathrm{faces}(\mathcal{X}): \emptyset \subset F \subset \mathcal{X}} \dist(F, \conv(\mathcal{V} \setminus F)).
\end{equation}
\end{definition}

Computing the pyramidal width is non-trivial.
Thus, we use the observation from \cite{valls2021birkhoff} that the pyramidal width of the Birkhoff polytope is lower bounded by the pyramidal width 
of the $n^2$-dimensional unit cube.
Therefore, we have $\delta \geq 1/n$, see \cite{lacoste2015global} for the proof.
We can now state the linear convergence rate of BPCG.
\begin{proposition}\label{prop:convergence}
Define $f(X)=\norm{XA - BX}_F^2$, $f^*=\min_{X\in\mathcal{D}_n} f(X).$
Then the iterations $\{X_t\}_{t=0}^T$ of the BPCG algorithm guarantee the following:
\begin{equation}
    f(X_t) - f^* \leq 2n(\norm{A}_F + \norm{B}_F)^2 \left( 1 - \frac{1}{4n^3(\norm{A}_F + \norm{B}_F)^2}\right)^{\lceil(t-1)/2\rceil}.
\end{equation}
In particular, if the graphs are undirected, unweighted, and have the same number of edges, $\norm{A}_F = \norm{B}_F = \sqrt{m}$, where $m = |E|$ if the graph is directed and $2|E|$ if the graph is undirected,
leading to:
\begin{equation*}
    f(X_t) - f^* \leq 8mn \left( 1 - \frac{1}{16n^3 m^2 }\right)^{\lceil(t-1)/2\rceil}.
\end{equation*}
\end{proposition}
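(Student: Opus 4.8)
The plan is to treat the statement as an instantiation of the generic linear-convergence guarantee for BPCG on a $1/2$-sharp, $L$-smooth objective over a polytope, and to reduce the whole argument to quantifying four problem-specific quantities: the smoothness constant $L$ of $f$, the diameter $D=\diam(\mathcal{D}_n)$, the pyramidal width $\delta$, and the sharpness (quadratic-growth) constant of $f$ on $\mathcal{D}_n$. Once these are in hand, substituting them into the cited rate and specializing to unweighted undirected graphs should yield the claimed bounds.

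First I would pin down the smoothness constant. Vectorizing the objective gives $\mathrm{vec}(XA-BX)=M\,\mathrm{vec}(X)$ with $M=A^{\top}\otimes I_n-I_n\otimes B$, so $f(X)=\norm{M\,\mathrm{vec}(X)}^2$ has constant Hessian $2M^{\top}M$ and hence is $L$-smooth with $L=2\norm{M}_2^2$. Using $\norm{A^{\top}\otimes I_n}_2=\norm{A}_2$ and subadditivity of the spectral norm, $\norm{M}_2\le\norm{A}_2+\norm{B}_2\le\norm{A}_F+\norm{B}_F$, which gives $L=2(\norm{A}_F+\norm{B}_F)^2$. For the geometry of the feasible region, the vertices of $\mathcal{D}_n$ are permutation matrices with $\norm{P-Q}_F^2=2n-2\innp{P}{Q}\le 2n$, so $\diam(\mathcal{D}_n)=\sqrt{2n}$; together with $f\ge 0$ and $\max_{X\in\mathcal{D}_n}f(X)\le\norm{M}_2^2\max_{X\in\mathcal{D}_n}\norm{X}_F^2\le n(\norm{A}_F+\norm{B}_F)^2$ (since $\norm{X}_F^2\le n$ on $\mathcal{D}_n$), this bounds the initial gap by $f(X_0)-f^*\le\tfrac{L}{2}\diam(\mathcal{D}_n)^2=2n(\norm{A}_F+\norm{B}_F)^2$, which is exactly the prefactor. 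For the pyramidal width I would invoke the stated lower bound $\delta\ge 1/n$ inherited from the $n^2$-dimensional unit cube.

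It remains to supply the sharpness input and assemble the rate. Since $f$ is a convex quadratic, it obeys a Hölderian error bound with exponent $1/2$ (quadratic growth) on $\mathcal{D}_n$, i.e.\ $f(X)-f^*\ge \tfrac{\mu}{2}\dist(X,\mathcal{X}^*)^2$ for the minimizer set $\mathcal{X}^*$, via a Hoffman-type argument on the affine hull of $\mathcal{D}_n$. Plugging $L$, $\delta$, $D$, and $\mu$ into the per-good-step contraction $1-\rho$ with $\rho\asymp \tfrac{\mu\,\delta^2}{4LD^2}$, and using the standard active-set counting for BPCG---where drop/descent (``bad'') steps are at most half of all iterations, so that at least $\lceil(t-1)/2\rceil$ good steps occur among the first $t$---produces the geometric factor raised to $\lceil(t-1)/2\rceil$. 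The specialization is then a direct substitution: for unweighted undirected graphs $\norm{A}_F=\norm{B}_F=\sqrt{m}$ gives prefactor $8mn$ and collapses the contraction accordingly.

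The main obstacle is the sharpness step: the quadratic-growth constant $\mu$ is a restricted smallest positive eigenvalue of $M^{\top}M$ on the directions tangent to $\mathcal{D}_n$ and transverse to $\mathcal{X}^*$, which is in general data-dependent and can be small, so the delicate part is to justify replacing it by a universal constant (equivalently, to absorb it) so that the contraction reduces to the purely data-driven expression $1-\tfrac{1}{4n^3(\norm{A}_F+\norm{B}_F)^2}$. Threading the universal numerical constants from the BPCG theorem together with the factor-$\tfrac12$ loss from bad steps so as to land exactly on this expression---rather than merely up to constants---is the part that requires care; the remaining diameter, smoothness, and specialization computations are routine.
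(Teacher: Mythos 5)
Your route is the same as the paper's: the proof there also instantiates the generic linear rate for BPCG on an $(M,1/2)$-sharp function (citing Theorem~6 of the optimal-experiment-design paper and Corollary~3.33 of the conditional-gradients survey), with exactly the three constants you compute --- smoothness $L=2(\norm{A}_F+\norm{B}_F)^2$, diameter $D=\sqrt{2n}$ via two permutation matrices with disjoint supports, and pyramidal width $\delta=1/n$ inherited from the $n^2$-dimensional unit cube --- and the exponent $\lceil(t-1)/2\rceil$ comes packaged in the cited theorem, so your good/bad-step accounting need not be redone. Your smoothness derivation via $\mathrm{vec}(XA-BX)=(A^{\top}\otimes I_n-I_n\otimes B)\,\mathrm{vec}(X)$ is in fact more explicit than the paper's, which states $L$ without computation.

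The one step you flag as the main obstacle is, however, a genuine gap in your proposal as a standalone proof --- and it is worth knowing that the paper does not close it either: the proof simply asserts that $f$ is $(1,1/2)$-sharp (i.e., $\mu=2$ in your quadratic-growth normalization) with no Hoffman-type argument and no reference. Your instinct is correct that for a convex quadratic the growth constant on a polytope is a restricted-eigenvalue/Hoffman quantity and data-dependent in general, so a universal $M=1$ is precisely the unproven crux of the proposition, not something you failed to reproduce. Your secondary worry about landing \emph{exactly} on the stated constants is moot: substituting $M=1$, $\delta=1/n$, $L=2(\norm{A}_F+\norm{B}_F)^2$, $D^2=2n$ into the cited rate gives the contraction $1-\frac{1}{8n^3(\norm{A}_F+\norm{B}_F)^2}$ rather than the stated $1-\frac{1}{4n^3(\norm{A}_F+\norm{B}_F)^2}$, and in the specialization $(\norm{A}_F+\norm{B}_F)^2=4m$ yields $16n^3m$ rather than the stated $16n^3m^2$; the paper's own constant-chasing is loose, so matching its displays digit-for-digit is not achievable from its proof as written. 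In short: same approach, correct where explicit, with your open sharpness step coinciding with the assertion the paper leaves unjustified.
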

\begin{proof}
By \cite[Theorem~6]{2023_HendrychBesanconPokutta_Optimalexperimentdesign} and \cite[Corollary~3.33]{braun2022conditional},
we have that
\begin{align*}
    f(X_t) - f^* \leq \left( 1 - \frac{\delta^2}{2M^2LD^2}\right)^{\lceil(t-1)/2\rceil} \frac{LD^2}{2}
\end{align*}
for a $(M,1/2)$-sharp function.
Our objective function $f$ is, in particular, $(1,1/2)$-sharp.
We now show for self-containment of our proof that the diameter of the Birkhoff polytope is $D=\sqrt{2n}$.
This can be verified by considering the maximal distance between two permutation matrices, attained when the sets of $n$ nonzero entries do not intersect.
\begin{align*}
    \norm{X - Y}_F &= \sqrt{\sum_{i=1}^n \sum_{j=1}^n (X_{ij} - Y_{ij})^2} = \sqrt{2n}.
\end{align*}
From the previous discussion, we can set the pyramidal width $\delta=1/n$.
The Lipschitz smoothness constant is \(L=2(\norm{A}_F + \norm{B}_F)^2\).
Combining the different constants into the generic convergence rate, we obtain the required result.
\qed
\end{proof}
\noindent
In the computational experiments, we utilize DICG as it has better empirical performance and memory efficiency.
The different elements of the above proof can be adapted to derive the linear convergence of DICG.

\subsection{The Boscia framework}
Boscia \cite{hendrych2023convex} is a branch-and-bound framework for convex mixed-integer optimization that employs
Frank--Wolfe (FW) algorithms to solve convex relaxations.
The novel aspect of the method is the propagation of the integer constraints to the LMO.
Thus, FW optimizes the objective over the convex hull of integer feasible points, called the \emph{integer hull},
intersected with the node-specific bound constraints.
The LMO always returns an integer feasible point and the B\&B tree obtains an upper bound (incumbent) from the root node.
The \emph{Frank--Wolfe gap} is an upper bound for the primal gap, from the convexity of the objective and the optimality of $\vvv$ for the linear subproblem:
\begin{align*}
    f(\vx) - f^* \leq \max_{\vvv\in\mathcal{D}_n}\innp{\nabla f(\vx)}{\vx - \vvv} =: g(\vx).
\end{align*}
Hence, FW provides a valid lower bound on the optimal value at any iteration.

The Birkhoff polytope is an uni-modular polytope, thus propagating the integer constraints to the LMO does not incur any additional computational cost.

Boscia features a \emph{callback} mechanism, which allows problem-specific logic to be executed after each node is processed and branched on.
Callbacks can inspect intermediate FW iterates, prune nodes prematurely,
terminate the entire algorithm when a desired condition is reached, or inject structural
information back into the solver.
This makes Boscia highly adaptable and particularly
effective for problems where feasibility, symmetry, or combinatorial structure can be
exploited during the tree search.

Within each node, the choice of FW variant plays an important role.
BPCG achieves fast iteration progress at the cost of maintaining an active set.
DICG avoids storing active sets altogether and is well-suited for structured polytopes like the Birkhoff polytope.

We assess two node selection strategies: the best-lower-bound search and the depth-first search.
The former is the most common in branch-and-bound algorithms and selects the node with the smallest lower bound as the next node to process; the latter explores the tree by following a branch until a leaf node is reached, after which it restarts the exploration from the best-bound node.
We also leverage the asymmetry of branching decisions on the Birkhoff polytope, always branching up (i.e., rounding to one) the branching variable during the depth-first search, thus building search trees of depth at most $n$.

\subsection{Variable fixing through local graph structures and perturbations}

Problem presolving is an essential driver of optimization methods efficiency.
Unlike specialized GI software or mixed-integer programming (MIP) solvers, Boscia does not perform presolving techniques because it only assumes an oracle access to both the objective and constraints.
In our GI formulation optimizing over the Birkhoff polytope through the Hungarian algorithm, adding general cutting planes would break the constraint structure and require a generic solver for the LMO.
Fixing variables of the permutation matrix to zero or one can however be performed, and results in an LMO consisting of a modified and less expensive call to the Hungarian algorithm.
We propose two presolving techniques detecting local graph structures and one optimization-based fixing technique.

The local graph structure consists in two steps: quantifying a property of each vertex that is independent of the vertex ordering,
and fixing to zero the entry $X_{ij}$ if $i$ and $j$ do not agree on that property.
The two properties we identify are the number of cliques of size $3\dots k$ that contain the current vertex.
This can be performed in polynomial time for a fixed $k$.
When the graph has a bounded degree, we can even use the Bron-Kerbosch algorithm \cite{eppstein2013listing} to compute all maximal cliques
and thus derive the number of maximal cliques of each size which each given vertex belongs to.
A pair of vertices from the first and second graphs can correspond to one another only if they have the same clique count for each clique size.
As a second substructure, we count maximal stars on the subgraphs induced by the neighborhood of each vertex, which correspond to independent sets on that graph.
Using a similar reasoning, if the graph has bounded degree, each induced subgraph has a fixed maximum size and we can compute all maximal independent sets.
Otherwise, we can compute independent sets up to a fixed size in these induced subgraphs.

As a last presolving step, we propose an optimization-based technique that tentatively fixes entries in the permutation matrix.
It exploits the non-negativity of the objective and only requires calls to the Hungarian algorithm and gradient computations.
For a given entry $(i,j)$ of the permutation matrix, we run a FW method with a limited iteration budget on the modified problem
\begin{align*}
\min_{X \in \mathcal{D}_n} \|X A - B X\|^2 + X_{ij}.
\end{align*}
If at an iteration $t$, the dual bound given by the primal value minus the FW gap is above zero,
the optimal value cannot attain zero.
From nonnegativity of the two objective parts, this implies that $X_{ij} > 0$ in any solution, and thus that $X_{ij}$ can be fixed to one, eliminating a row and column.
We can equivalently optimize with a term $1 - X_{ij}$ to attempt to fix $X_{ij}$ at zero.
This technique can be viewed as a modified optimization-based bound tightening (OBBT) using an augmented Lagrangian formulation, relating to earlier work on generic mixed-integer optimization \cite{gleixner2017three}.
Furthermore, a set of variable fixings that is infeasible on the Birkhoff polytope provides a proof of non-isomorphism of the graphs.
This technique has a polynomial runtime but is much costlier in practice than the substructure presolve.

\section{Solution approaches}\label{sec:solution-approaches}

First, we consider the exact integer formulation
\[
    \min_{X \in \mathcal{P}_n} \, \|XA - BX\|_F^2,
\]
which we solve using \textbf{Boscia}. 
At each node,
Boscia optimizes the objective to a given tolerance over the corresponding integer hull using DICG.
This variant consistently exhibited superior performance both in terms of progress per iteration and wall-clock time
compared to standard FW and BPCG.
The LMO is computed with the Hungarian algorithm which, especially for large dimensions, is computationally more 
efficient than calling an LP solver.
The callback mechanism provides effective control over the search. After a node is
processed, its lower bound is inspected; if the bound is strictly positive, the subtree
cannot contain a feasible solution and is pruned. The callback also checks the incumbent and the lower bound of the tree;
if the incumbent reaches zero, a
permutation matrix satisfying \(XA = BX\) has been found, and the algorithm terminates,
certifying isomorphism.
On the other hand, if the lower bound is greater than zero, it certifies non-isomorphism.

As discussed earlier, the convex relaxation of the GI problem may admit fractional
solutions that do not correspond to valid isomorphisms. One approach to address this
issue is to augment the formulation with a penalty term that promotes solutions closer to
permutation matrices, as proposed in~\cite{klus2025continuous}. The resulting optimization problem is
\[
    \min_{\substack{X \in \mathcal{D}_n \\ XA = BX}} \; -\|X\|_F^{2},
\]
where the constraint $XA = BX$ enforces feasibility with respect to the isomorphism
condition, and the objective $-\|X\|_F^{2}$ biases the solution toward the extreme points
of~$\mathcal{D}_n$, i.e., the permutation matrices.
We denote it by \textbf{Penalty}.
The above problem is concave, hence, there is no guarantee that Frank--Wolfe will converge to the global minimum.
This may be mitigated by generating a good initial point, see~\cite{klus2025continuous} for more details.
Note that~\cite{klus2025continuous} uses an LP solver to generate the initial point from the convex relaxation.
The relaxation is naturally suited to Frank--Wolfe methods, which can produce a feasible fractional point far more efficiently.
Nevertheless, this method can only serve as a heuristic.

A drawback of the previous formulation is the constraint $XA = BX$ as it leads to expensive LP subproblems.
The difference-of-convex (DC) formulation
\[
    \min_{X \in \mathcal{D}_n}
        \; \| XA - BX \|_F^{2}
        - \lambda \|X\|_F^{2},
\]
moves the constraint into the objective.
Thus, the LMO can be computed with the Hungarian algorithm.
Like the aforementioned penalty formulation, the problem is nonconvex but its structure
allows the use of FW-type algorithm for DC, proposed in~\cite{maskan2025revisiting} and extended in~\cite{DCA}.
It will be further identified by \textbf{DC-FW}.
This approach cannot guarantee finding an optimal permutation, but it may lead to faster convergence compared to running Frank--Wolfe directly.
While DCA guarantees convergence to a
stationary point, it offers no guarantee of global optimality. The quality of the
solution is sensitive to the choice of the parameter \( \lambda \); for instance, a
large \( \lambda \) may steer the iterates away from matrices satisfying
\( \| XA - BX \|_F = 0 \). 
In our experiments, we have set $\lambda=10^{-2}$.

The GI problem can also be formulated as a pure binary feasibility problem.
Given adjacency matrices \(A\) and \(B\), the graphs are isomorphic if and only if there exists
a permutation matrix \(X\) such that
\[
    XA = BX, \; X \in \mathcal{P}_n.
\]
where \(\mathcal{P}_n\) is the set of all permutation matrices.
This formulation can be handed directly to a \textbf{MIP} solver, in this case \textsc{SCIP}, by enforcing the linear and integrality
constraints defining \(\mathcal{P}_n\) together with the equation \(XA = BX\).
Although potentially expensive for large instances, this provides an exact and simple
baseline against which we compare our other approaches.
Furthermore, to the best of our knowledge, there are no assessement of the practical performance of a recent MIP solver on the GI problem.

A \textbf{Spectral} assignment approach for detecting isomorphisms was proposed in~\cite{KS18}.
The method relies on iteratively perturbing the adjacency matrices of the two graphs to break symmetries and assigning vertices of $ G_1 $ to vertices of $ G_2 $ by solving linear assignment problems that are based on the eigenvalues and eigenvectors of the perturbed matrices.
While it can be shown that the graph isomorphism problem can be solved in polynomial time if the graphs are friendly (see, e.g., \cite{ABK15}), the method might require backtracking for graphs with repeated eigenvalues and therefore does not have a polynomial runtime.

\section{Computational experiments}

Our benchmark consists of 12 graph families from the \texttt{nauty} benchmark library \cite{nauty-traces-benchmarks}, ranging from 10 to 500 nodes.
The \texttt{exact} set is from the PACE 2023 challenge \cite{pace-2023-twinwidth}.
All graphs except the \texttt{exact} set are regular.
For each graph, we generate three isomorphic instances by applying
random permutation matrices. An instance is counted as solved if the method correctly
identifies isomorphism within the time limit of 1 hour.
We compare methods by the number of solved instances and their solving times.

The methods described in Section~\ref{sec:solution-approaches} are implemented in \texttt{Julia} v1.10.3.
We use \textsc{SCIP} 9.2.4 \cite{bestuzheva2023enabling,BolusaniEtal2024OO} as the MIP solver through its \textsc{MathOptInterface.jl} wrapper.
The integer Frank--Wolfe algorithms rely on \textsc{Boscia.jl} v0.2.3~\cite{hendrych2023convex} and
\textsc{FrankWolfe.jl} v0.6.1~\cite{besanccon2022frankwolfe,besanccon2025improved}.
The linear assignment oracle uses \textsc{Hungarian.jl} v0.7.
All experiments are conducted on a 32-core node (Intel(R) Xeon(R) Gold 6338, 2.00GHz, 1024GB RAM).
We utilize the \emph{secant} line search in FW variants, which amounts to exact line search with a single additional gradient call on quadratic functions \cite{hendrych2025secant}.

The families are split by symmetry in the tables where we consider families with normalized orbits of 0.0--0.3 as highly symmetric (top), 0.3-0.7 as varying degrees of symmetry (middle) and 0.7--1.0 as low symmetry (bottom).
Table~\ref{tab:gi_results_summary} shows the performance of the selected methods on the benchmark.
Note that both penalty heuristics as well as the spectral method were always dominated by either one of the Boscia versions
and/or the MIP approach, so we do not report the results in the table.
Their performance is shown for selected graph families in later figures.
As highlighted in the Section~\ref{sec:solution-approaches}, the spectral method potentially has to backtrack to find an
isomorphism.
Due to the nonconvexity of the penalty-based heuristics, they may converge to a local minimum.
The first four columns of the table denote different versions of Boscia.
The first version is Boscia without any preprocessing and the depth-first-search (DFS) traverse strategy.
The next three utilize different preprocessing steps in Boscia: star detection, clique and star detection and the OBBT fixings.
All of these versions use the DFS traverse strategy.
Note that the presolving methods of \textsc{SCIP} are enabled and it calls \texttt{nauty} as part of its preprocessing. 
We also conducted additional experiments with the $\ell_1$-vector-norm formulation solved via MIP solver, which consistently underperformed the
feasibility formulation proposed in the paper, thus it is not reported separately.

Table~\ref{tab:gi_results_summary} also shows the results for \texttt{nauty} for completeness. 
Yet we highlight that we do not compare the proposed methods to \texttt{nauty} as it is specialized for GI
and not an optimization method.

\begin{table}[h]
    \centering
    \Large
    \renewcommand{\arraystretch}{1.40}
    \resizebox{\textwidth}{!}{%
    \begin{tabular}{lr ccc ccc ccc ccc ccc | ccc}
    \toprule
    Family & Inst. & \multicolumn{3}{c}{Boscia DFS} & \multicolumn{3}{c}{Boscia Star} & \multicolumn{3}{c}{Boscia Clique \& Star} & \multicolumn{3}{c}{Boscia Fixings} & \multicolumn{3}{c}{MIP} & \multicolumn{3}{c}{Nauty} \\
    \cmidrule(lr){3-5} \cmidrule(lr){6-8} \cmidrule(lr){9-11} \cmidrule(lr){12-14} \cmidrule(lr){15-17} \cmidrule(lr){18-20}
    & & \# & \% & Time (s) & \# & \% & Time (s) & \# & \% & Time (s) & \# & \% & Time (s) & \# & \% & Time (s) & \# & \% & Time (s) \\
    \midrule
    
    latin & 63 & 34 & \textbf{54\%} & 289.71 & 34 & \textbf{54\%} & 298.79 & 34 & \textbf{54\%} & 303.30 & 15 & 24\% & 1530.52 & 34 & \textbf{54\%} & 257.23 & 63 & 100\% & 0.38 \\
    Lattice & 21 & 15 & 71\% & 201.26 & 15 & 71\% & 202.80 & 13 & 62\% & 218.84 & 6 & 29\% & 1621.49 & 18 & \textbf{86\%} & 96.07 & 21 & 100\% & 0.37 \\
    paley\_power & 24 & 19 & \textbf{79\%} & 109.66 & 19 & \textbf{79\%} & 124.95 & 19 & \textbf{79\%} & 122.61 & 6 & 25\% & 1282.27 & 10 & 42\% & 365.73 & 24 & 100\% & 0.37 \\
    paley\_prime & 21 & 18 & \textbf{86\%} & 46.26 & 18 & \textbf{86\%} & 51.45 & 18 & \textbf{86\%} & 67.53 & 6 & 29\% & 1316.20 & 10 & 48\% & 234.53 & 21 & 100\% & 0.35 \\
    Triangular & 21 & 15 & 71\% & 241.13 & 15 & 71\% & 256.89 & 15 & 71\% & 254.69 & 6 & 29\% & 2189.35 & 17 & \textbf{81\%} & 122.98 & 21 & 100\% & 0.35 \\
    \midrule
    CHH\_cc & 24 & 8 & 33\% & 740.13 & 8 & 33\% & 870.83 & 8 & 33\% & 842.36 & 6 & 25\% & 2150.97 & 22 & \textbf{92\%} & 80.67 & 24 & 100\% & 0.40 \\
    tnn & 42 & 13 & 31\% & 1299.91 & 14 & 33\% & 887.11 & 14 & 33\% & 919.52 & 6 & 14\% & 2261.95 & 30 & \textbf{71\%} & 106.28 & 35 & 83\% & 14.55 \\
    \midrule
    cfi & 12 & 0 & 0\% & 3600.00 & 0 & 0\% & 3600.00 & 0 & 0\% & 3600.00 & 0 & 0\% & 3600.00 & 11 & \textbf{92\%} & 383.75 & 12 & 100\% & 0.57 \\
    exact & 21 & 21 & \textbf{100\%} & 35.57 & 21 & \textbf{100\%} & 5.21 & 18 & 86\% & 22.75 & 6 & 29\% & 1302.23 & 12 & 57\% & 79.76 & 21 & 100\% & 0.36 \\
    iso\_r01N & 21 & 18 & \textbf{86\%} & 35.14 & 18 & \textbf{86\%} & 5.91 & 18 & \textbf{86\%} & 6.26 & 9 & 43\% & 1025.71 & 18 & \textbf{86\%} & 32.33 & 21 & 100\% & 0.35 \\
    sts & 21 & 0 & 0\% & 3600.00 & 12 & 57\% & 444.28 & 18 & \textbf{86\%} & 411.44 & 0 & 0\% & 3600.00 & 0 & 0\% & 3600.00 & 21 & 100\% & 1.10 \\
    usr & 36 & 5 & 14\% & 2538.99 & 30 & \textbf{83\%} & 58.27 & 18 & 50\% & 200.90 & 4 & 11\% & 3164.43 & 6 & 17\% & 1867.12 & 30 & 83\% & 15.94 \\
    \bottomrule
    \end{tabular}%
    }
    \caption{Performance summary by graph family. Best performance for each family is highlighted in bold.
    The reported times are the geometric mean of the total solving times shifted by 1 second for all instances. This includes timed out instances.}
    \label{tab:gi_results_summary}
\end{table}

High symmetry seems to have an overall positive effect. 
The reason is that the number of possible solution increases with the symmetry. 
The relative weakness of the effect is likely due to the graphs in the symmetric families being rather dense.

The multitude of solutions in the symmetric case also explains why the preprocessing steps do not improve
the performance of Boscia.
On the other hand, it has a big positive impact on the asymmetric instances.
Especially on the \texttt{usr} set, Boscia with preprocessing outperforms the MIP approach.
We further highlight that even \texttt{nauty} is struggling on this set compared to its performance on the other sets.

For the middle level symmetric families, all of the Boscia variants are outperformed by the MIP approach.
They seem to exhibit enough symmetry that the preprocessing is not very effective but do not admit too many possible solutions.
Nonetheless, the MIP feasibility approach is beating our method on only four families out of twelve.

Other exact approaches for GI are based on the NP-hard Quadratic Assignment Problem (QAP) \cite{aurora2018qap}, using in 
particular a lifted formulation resulting in $\mathcal{O}(n^4)$ variables.
We did not test it given its high computational cost for moderate graphs compared to the compact MIP feasibility problem, also reported by the authors.

The OBBT fixing, while effective for low-symmetry graphs, is in its current implementation far too costly, see Table~\ref{tab:fixings_analysis}.
Thus, we solve hardly more instances than the baseline DFS version. 
For symmetric graphs in particular, OBBT hardly fixes any entries. 
Nonetheless, the design of an efficient OBBT-based algorithm is a promising avenue for future research.
\begin{table}[h]
    \centering
    \normalsize
    \renewcommand{\arraystretch}{1.10}
    \resizebox{\textwidth}{!}{%
    \begin{tabular}{lrrrrrr}
    \toprule
    Family & \# Graphs & \% Solved & Time (s) & Fixing Time (s) & Fixings/$n^{\scalebox{0.6}{2}}$ & Avg Iters \\
    \midrule
    
    latin & 63 & 23.8\% & 1530.52 & 1523.19 & 0.16\% & 23 \\
    Lattice & 21 & 28.6\% & 1621.49 & 1614.70 & 0.00\% & 0 \\
    paley\_power & 24 & 25.0\% & 1282.27 & 1275.75 & 0.00\% & 0 \\
    paley\_prime & 21 & 28.6\% & 1316.20 & 1310.75 & 0.00\% & 0 \\
    Triangular & 21 & 28.6\% & 2189.35 & 2182.52 & 0.00\% & 0 \\
    \midrule
    CHH\_cc & 24 & 25.0\% & 2150.97 & 2145.75 & 64.25\% & 33 \\
    tnn & 42 & 14.3\% & 2261.95 & 2259.73 & 77.44\% & 13 \\
    \midrule
    cfi & 12 & 0.0\% & 3600.00 & 3600.00 & 0.00\% & 0 \\
    exact & 21 & 28.6\% & 1302.23 & 1299.37 & 59.93\% & 11 \\
    iso\_r01N & 21 & 42.9\% & 1025.71 & 1022.23 & 30.60\% & 16 \\
    sts & 21 & 0.0\% & 3600.00 & 3600.00 & 0.00\% & 0 \\
    usr & 36 & 11.1\% & 3164.43 & 2818.93 & 0.00\% & 0 \\
    \bottomrule
    \end{tabular}%
    }
    \caption{Fixings analysis by graph family. \# Graphs: number of instances; \% Solved: percentage solved within time limit; Time (s): geometric mean of total solving time shifted by 1 second; Fixing Time (s) geometric mean of time spent on optimization-based bound tightening (OBBT) fixings shifted by 1 second; Fixings/$n^{2}$: percentage of permutation matrix entries fixed relative to $n^2$; Avg Iters: average number of Frank--Wolfe iterations for the fixing procedure.}
    \label{tab:fixings_analysis}
\end{table}

Clique and star detection likewise do not have much impact for symmetric graphs.
In contrast to the fixings, they are very cheap and thus also do not negatively effect the performance.
On the asymmetric instances, the proposed method benefits a lot from clique and star detection.
Notable exception is the \texttt{exact} set. 
The reason is that it is the only set which does, in fact, include non-regular graphs.
The assumption of regularity is crucial for the clique and star detection to be effective.

\begin{figure}[t]
    \centering
    \subfloat[Comparing all methods]{\includegraphics[width=0.65\textwidth]{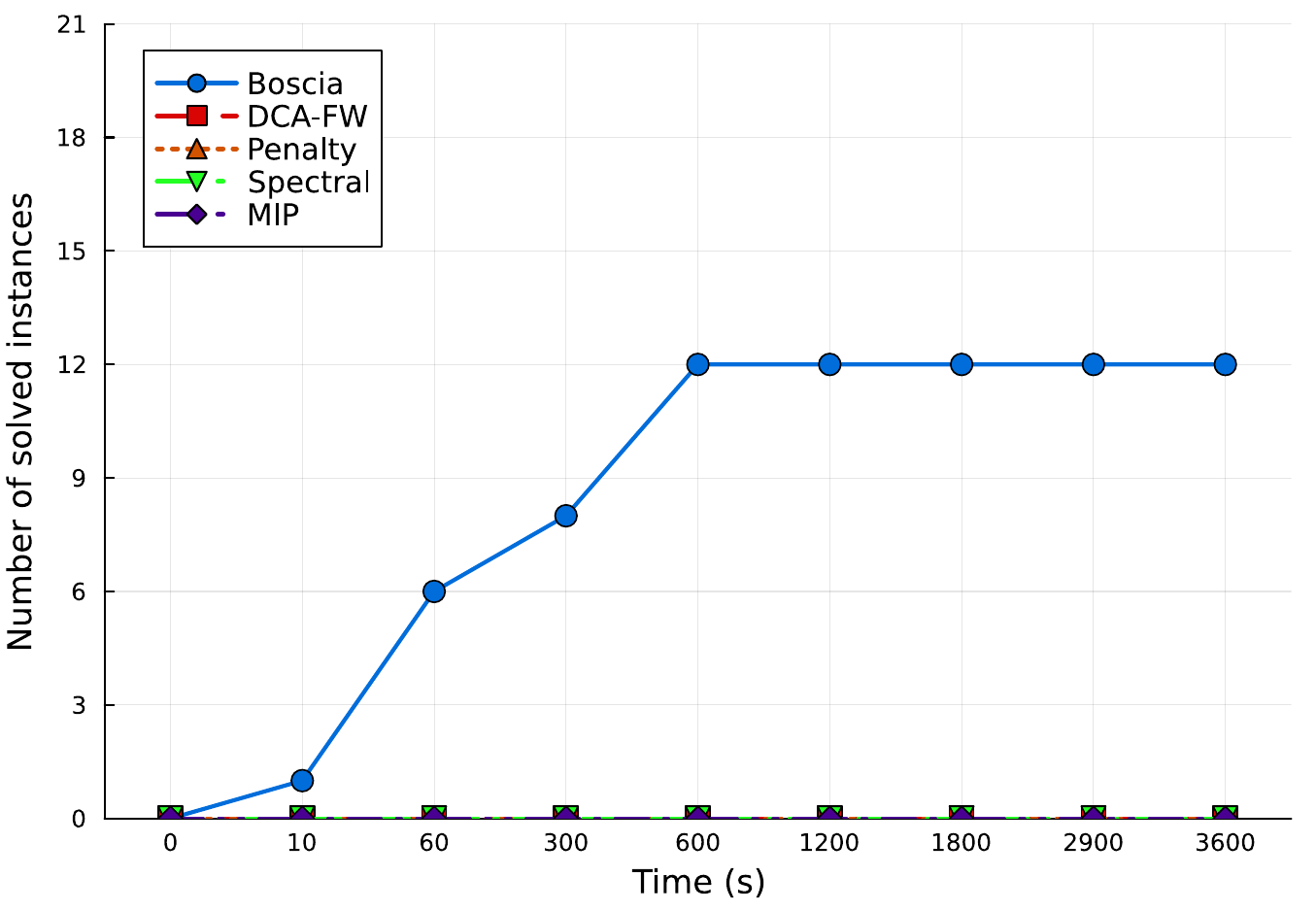}}
    \hfill
    \subfloat[Comparing Boscia variants\label{fig:sts_boscia_summary}]{\includegraphics[width=0.65\textwidth]{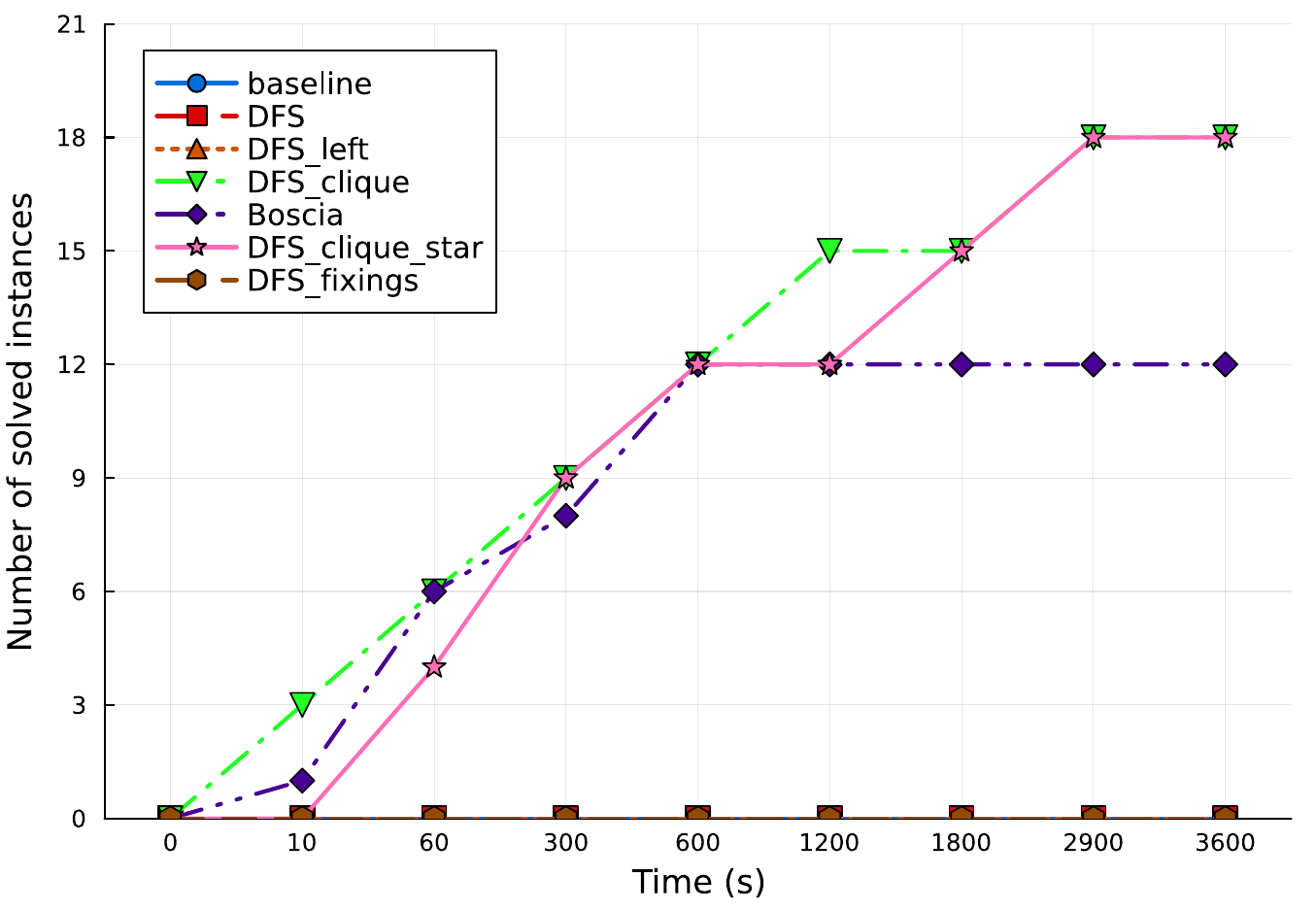}}
    \caption{Solved instances over time for \texttt{sts} graph family.}
    \label{fig:sts_summary}
\end{figure}

In Figures \ref{fig:sts_summary} to \ref{fig:iso_r01N_summary}, we showcase the number of solved instances
over time for the different methods on a selection of families.
On the left side, we compare the different methods. 
Boscia with DFS and the star detection has overall the best performance of all variants and it is thus the chosen 
method for the comparison plots, denoted just as Boscia.
On right, we compare the different versions of Boscia.
The baseline is standard Boscia with best-lower-bound search and without any preprocessing.
By Boscia, we denote Boscia with DFS and the star detection enabled like on the left side.
The DFS\_left variant is the depth-first-search but favoring the left child, so fixing an entry to zero. 
Given the structure of the Birkhoff polytope, this is predictably not as performant as favouring the right child.

On the \texttt{sts} family, shown in Figure~\ref{fig:sts_summary}, only our method is able to solve a significant amount of instances,
neither of the other approaches solved any instance.
The graphs in this family are sparse but exhibit low symmetry.
The penalty and spectral methods generally benefit from symmetry and struggle on more asymmetric instances.
We also observe from Figure~\ref{fig:sts_boscia_summary} that clique detection makes a significant performance difference. 

\begin{figure}[t]
    \centering
    \subfloat[Comparing all methods]{\includegraphics[width=0.65\textwidth]{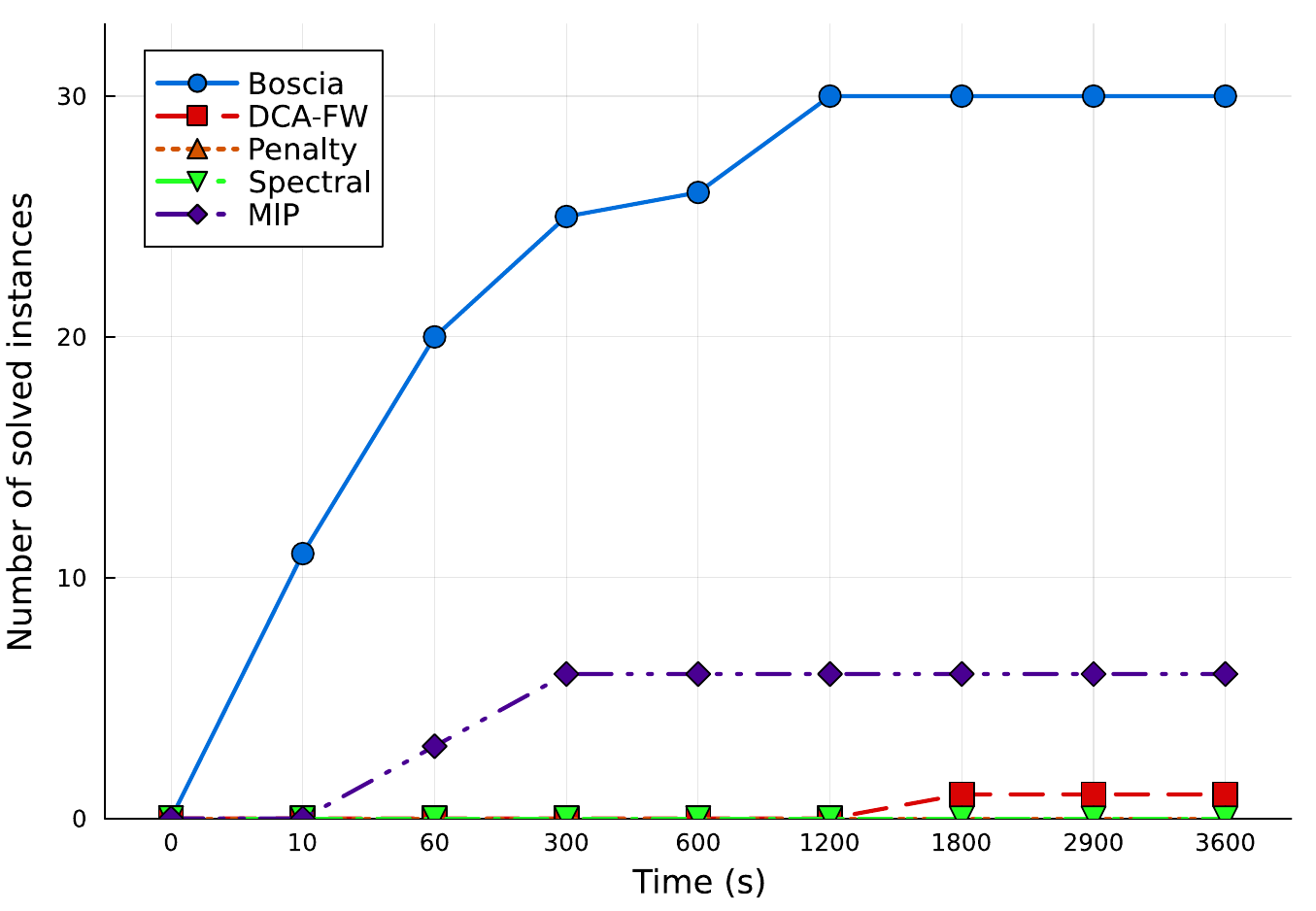}}
    \hfill
    \subfloat[Comparing Boscia variants\label{fig:usr_boscia_summary}]{\includegraphics[width=0.65\textwidth]{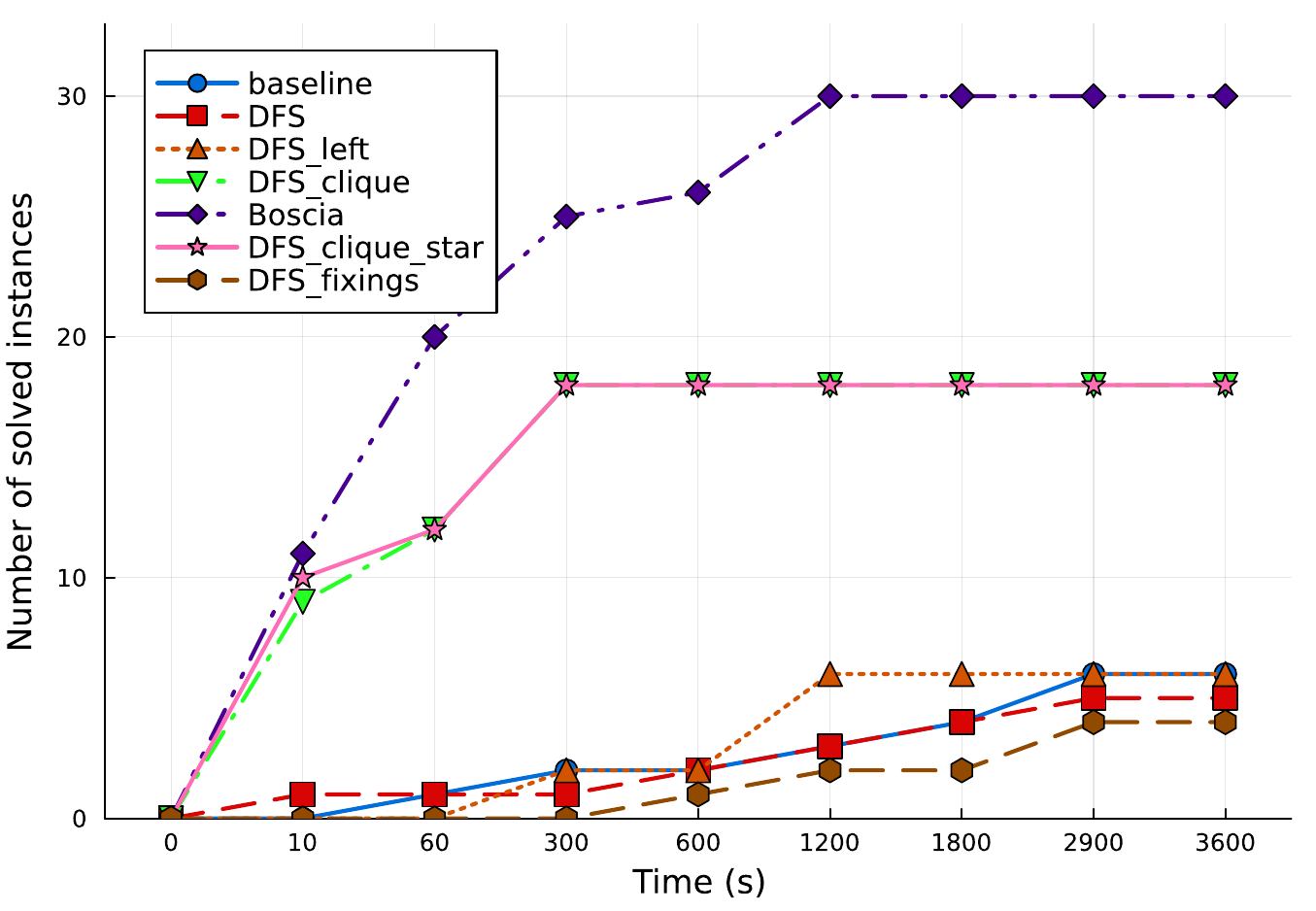}}
    \caption{Solved instances over time for \texttt{usr} graph family.}
    \label{fig:usr_summary}
\end{figure}

The same behavior can be observed for the \texttt{usr} family on Figure~\ref{fig:usr_summary}.
The MIP performs better here than on the \texttt{sts} family, but is not competitive.
This graph family is the densest one in our benchmark, so this could account for performance of the MIP approach.

\begin{figure}[t]
    \centering
    \subfloat[Comparing all methods]{\includegraphics[width=0.65\textwidth]{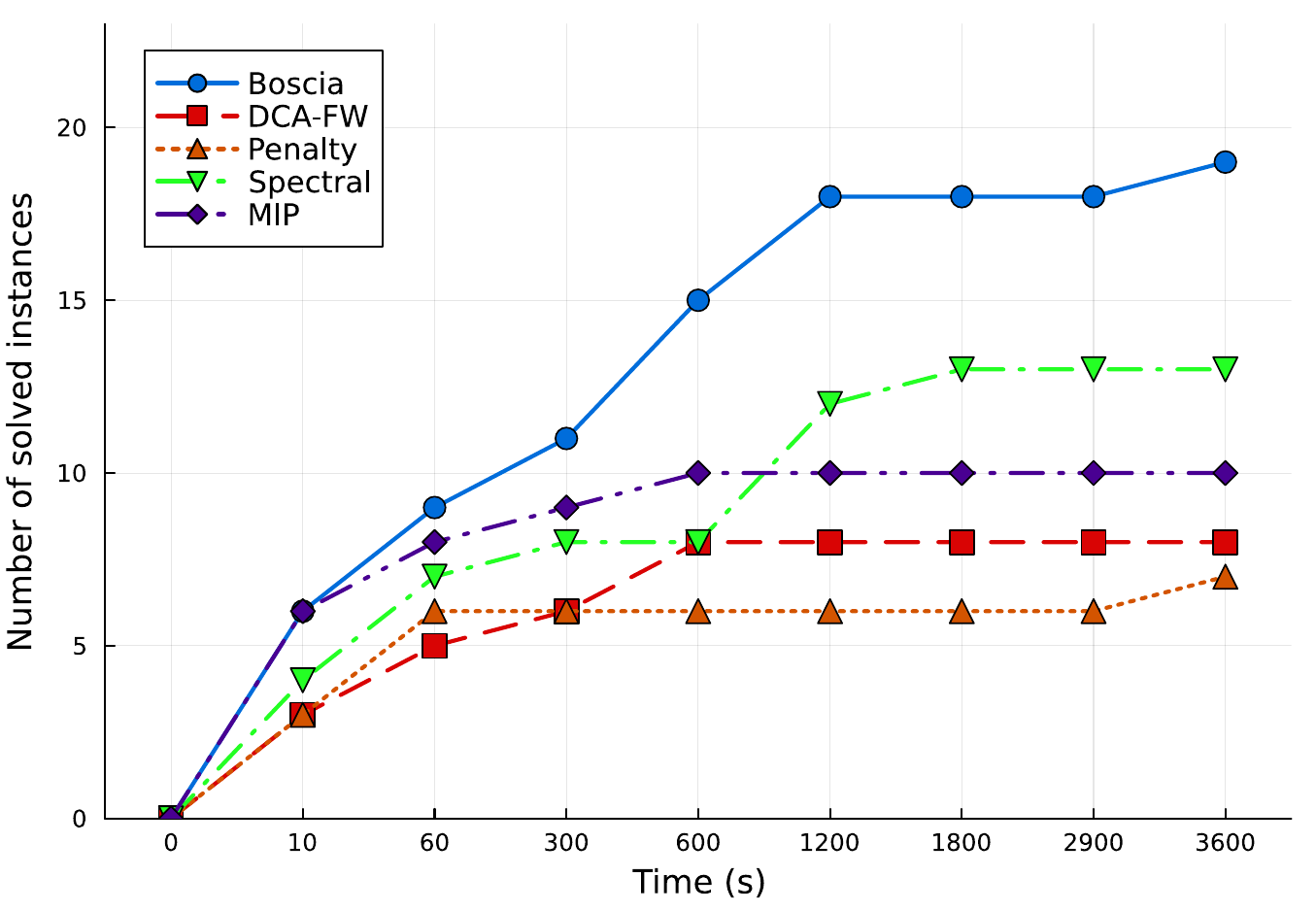}}
    \hfill
    \subfloat[Comparing Boscia variants]{\includegraphics[width=0.65\textwidth]{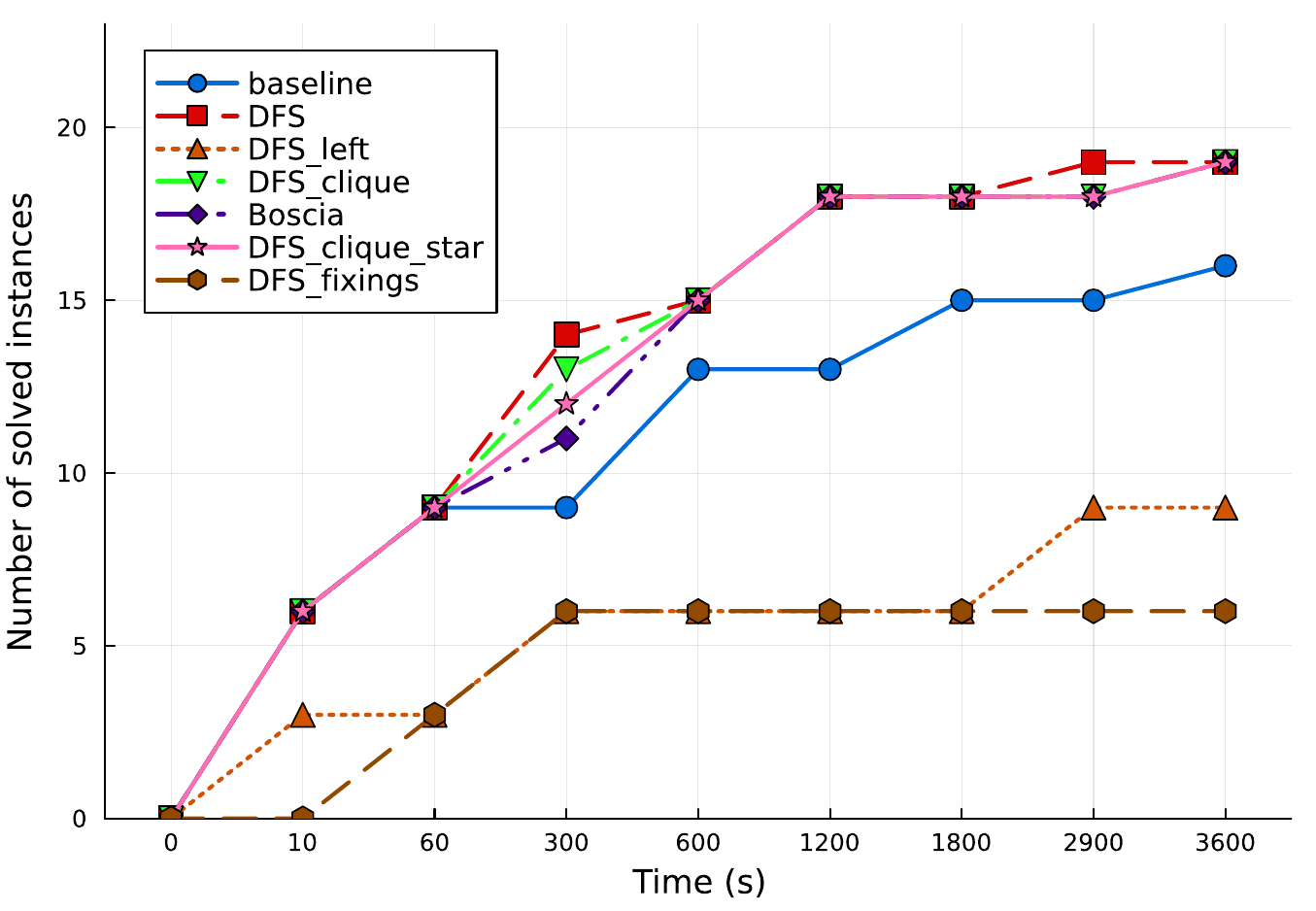}}
    \caption{Solved instances over time for \texttt{paley\_power} graph family.}
    \label{fig:paley_power_summary}
\end{figure}

\begin{figure}[t]
    \centering
    \subfloat[Comparing all methods]{\includegraphics[width=0.65\textwidth]{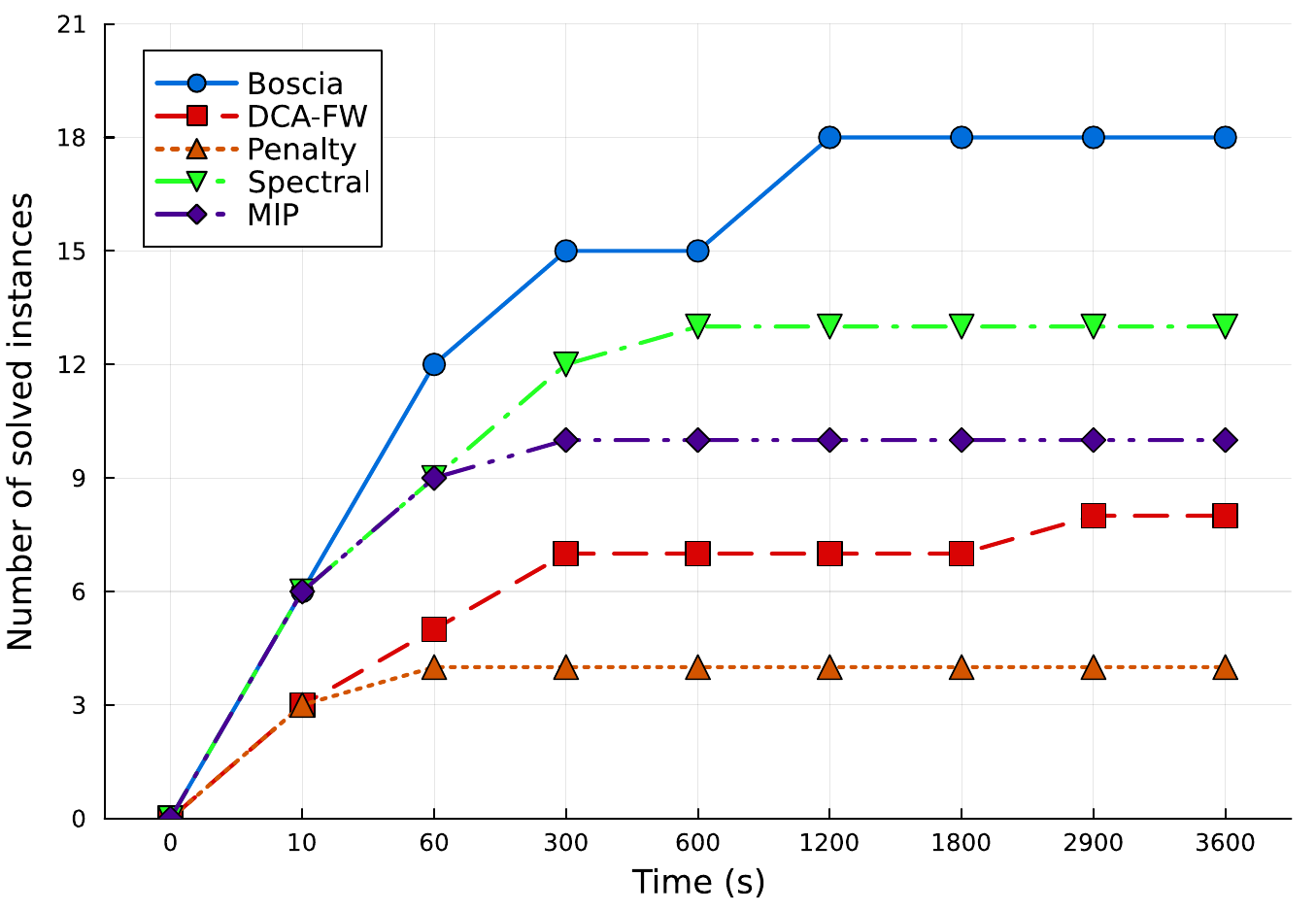}}
    \hfill
    \subfloat[Comparing Boscia variants]{\includegraphics[width=0.65\textwidth]{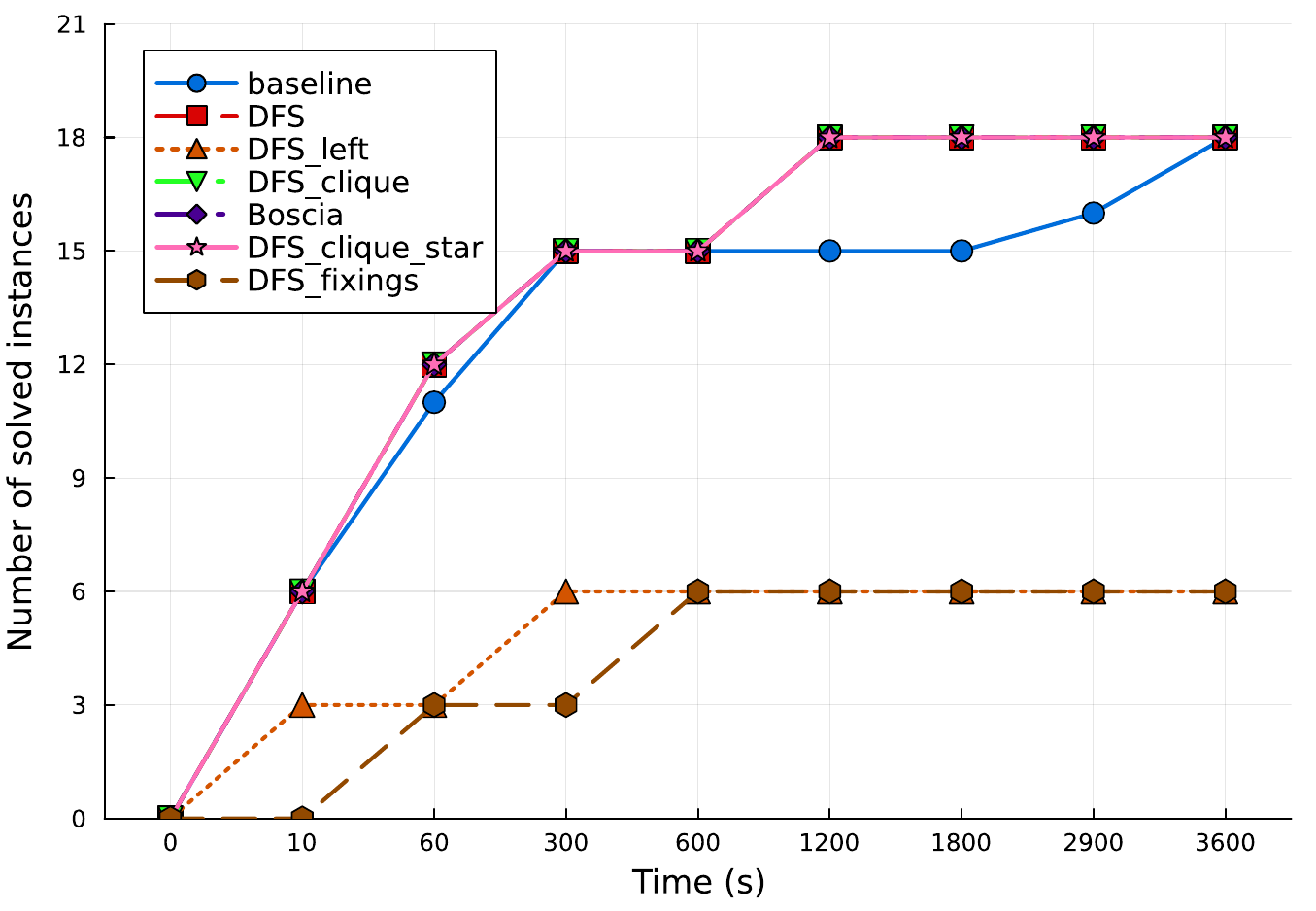}}
    \caption{Solved instances over time for \texttt{paley\_prime} graph family.}
    \label{fig:paley_prime_summary}
\end{figure}

For both \texttt{Paley} graph families, Figures \ref{fig:paley_power_summary} and \ref{fig:paley_prime_summary},
the clique and star detection are nearly equally effective.
Our method outperforms all other methods on these sets as well.
Interestingly, the spectral method outperforms the feasibility approach on these sets.
It, in particular, fares well on the \texttt{paley\_prime} family.

\begin{figure}[t]
    \centering
    \subfloat[Comparing all methods]{\includegraphics[width=0.65\textwidth]{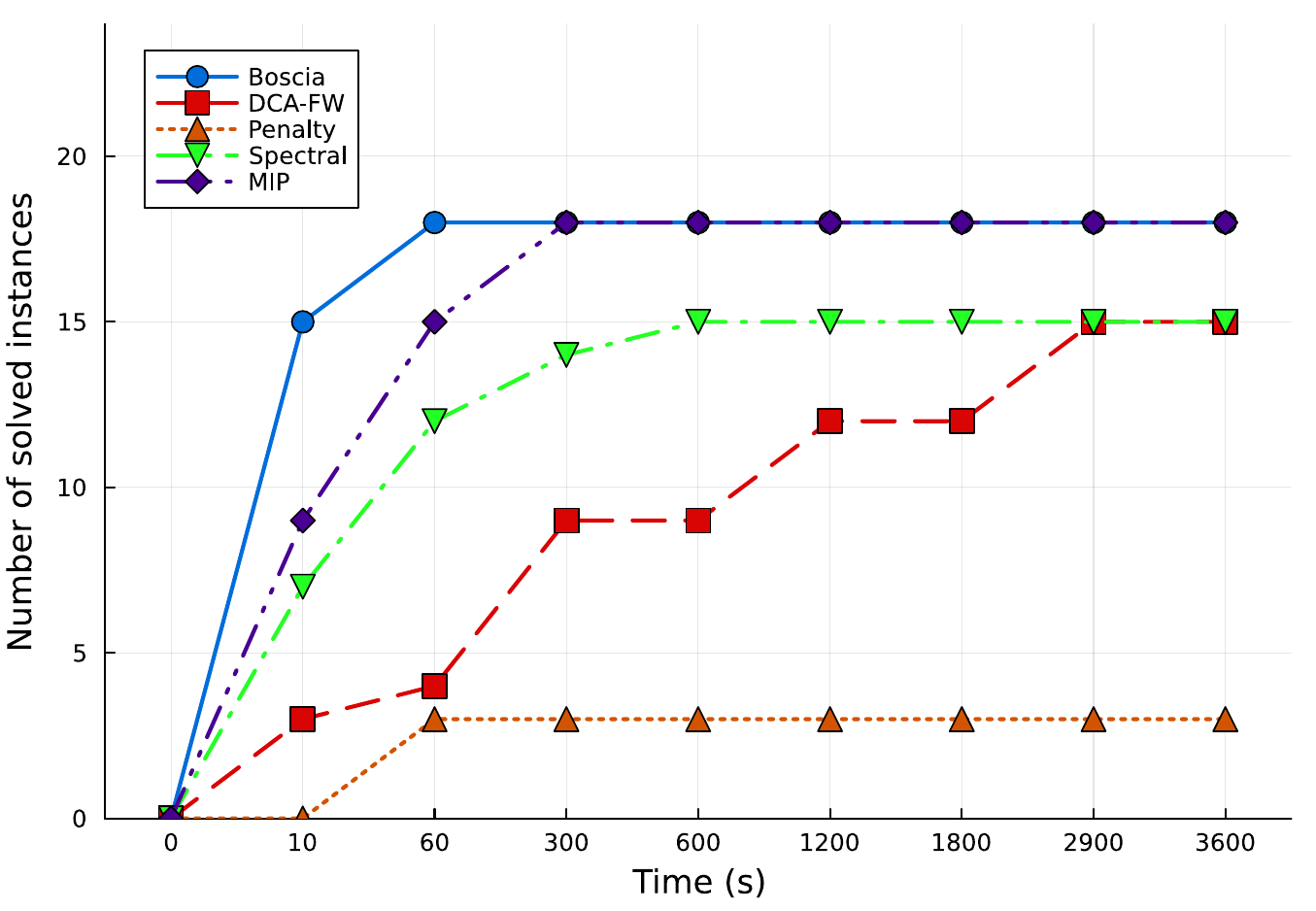}}
    \hfill
    \subfloat[Comparing Boscia variants]{\includegraphics[width=0.65\textwidth]{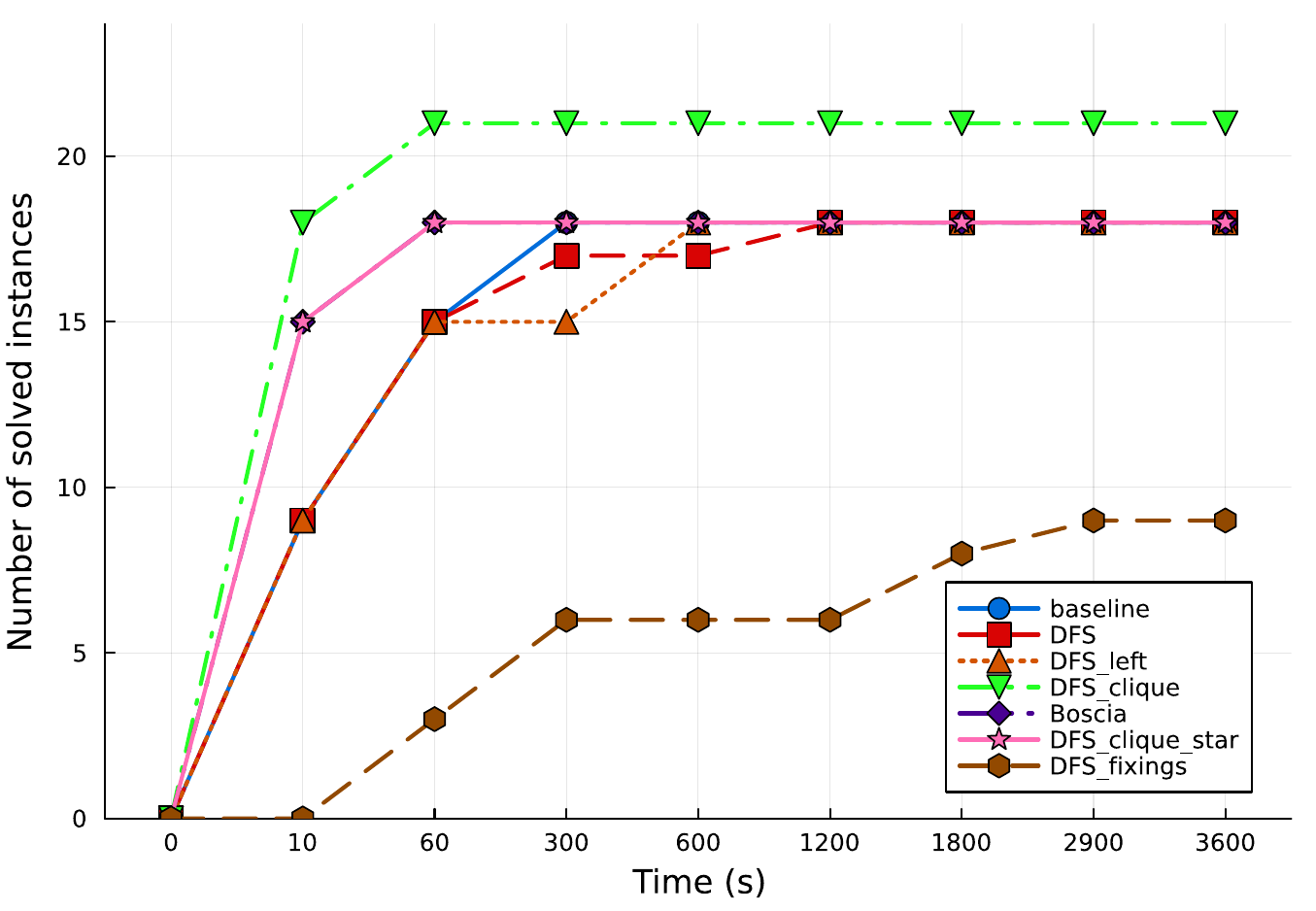}}
    \caption{Solved instances over time for \texttt{iso\_r01N} graph family.}
    \label{fig:iso_r01N_summary}
\end{figure}

The \texttt{iso\_r01N} family, Figure \ref{fig:iso_r01N_summary}, is relatively easy to solve for most methods.
This is somewhat counterintuitive as the graphs exhibit very low symmetry. 
On the other hand, this family includes smaller graphs which might contribute to the ease of solving.

\begin{table}[h]
    \centering
    \Large
    \renewcommand{\arraystretch}{1.40}
    \resizebox{\textwidth}{!}{%
    \begin{tabular}{lr ccc ccc ccc ccc ccc}
    \toprule
    Family & Inst. & \multicolumn{3}{c}{Boscia DFS} & \multicolumn{3}{c}{Boscia Clique} & \multicolumn{3}{c}{Boscia Star} & \multicolumn{3}{c}{Boscia Fixings} & \multicolumn{3}{c}{MIP} \\
    \cmidrule(lr){3-5} \cmidrule(lr){6-8} \cmidrule(lr){9-11} \cmidrule(lr){12-14} \cmidrule(lr){15-17}
    & & \# & \% & Time (s) & \# & \% & Time (s) & \# & \% & Time (s) & \# & \% & Time (s) & \# & \% & Time (s) \\
    \midrule
    
    latin & 63 & 13 & 21\% & 719.69 & 63 & \textbf{100\%} & 0.03 & 63 & \textbf{100\%} & 0.51 & 17 & 27\% & 981.22 & 22 & 35\% & 772.60 \\
    Lattice & 21 & 4 & 19\% & 789.79 & 21 & \textbf{100\%} & 0.00 & 21 & \textbf{100\%} & 0.04 & 6 & 29\% & 965.80 & 10 & 48\% & 307.76 \\
    paley\_power & 24 & 6 & 25\% & 494.10 & 24 & \textbf{100\%} & 0.23 & 24 & \textbf{100\%} & 2.93 & 9 & 38\% & 879.39 & 9 & 38\% & 526.51 \\
    paley\_prime & 21 & 6 & 29\% & 374.47 & 21 & \textbf{100\%} & 0.13 & 21 & \textbf{100\%} & 0.96 & 7 & 33\% & 702.10 & 12 & 57\% & 307.43 \\
    Triangular & 21 & 3 & 14\% & 1170.75 & 21 & \textbf{100\%} & 0.01 & 21 & \textbf{100\%} & 0.09 & 5 & 24\% & 1343.66 & 12 & 57\% & 360.24 \\
    \midrule 
    CHH\_cc & 24 & 7 & 29\% & 582.42 & 24 & \textbf{100\%} & 0.00 & 24 & \textbf{100\%} & 0.01 & 6 & 25\% & 1366.70 & 24 & \textbf{100\%} & 30.55 \\
    tnn & 42 & 12 & 29\% & 867.90 & 42 & \textbf{100\%} & 0.01 & 42 & \textbf{100\%} & 0.04 & 12 & 29\% & 1559.77 & 34 & 81\% & 72.93 \\
    \midrule
    cfi & 12 & 0 & 0\% & 3600.00 & 12 & \textbf{100\%} & 0.01 & 12 & \textbf{100\%} & 0.02 & 0 & 0\% & 3600.00 & 12 & \textbf{100\%} & 135.69 \\
    exact & 21 & 12 & 57\% & 108.25 & 18 & 86\% & 0.11 & 21 & \textbf{100\%} & 0.18 & 9 & 43\% & 636.68 & 12 & 57\% & 60.77 \\
    iso\_r01N & 21 & 12 & 57\% & 250.89 & 21 & \textbf{100\%} & 0.01 & 18 & 86\% & 0.28 & 10 & 48\% & 660.03 & 15 & 71\% & 41.00 \\
    sts & 21 & 0 & 0\% & 3600.00 & 21 & \textbf{100\%} & 0.16 & 21 & \textbf{100\%} & 4.20 & 0 & 0\% & 3600.00 & 3 & 14\% & 2124.66 \\
    usr & 36 & 6 & 17\% & 1559.75 & 18 & 50\% & 130.22 & 36 & \textbf{100\%} & 2.55 & 10 & 28\% & 1402.56 & 15 & 42\% & 853.38 \\
    \bottomrule
    \end{tabular}%
    }
    \caption{Performance summary on the non-isomorphic instances by graph family. Best performance for each family is highlighted in bold.
    The reported times are the geometric mean of the total solving times shifted by 1 second for all instances. This includes timed out instances.}
    \label{tab:non_isomorphic_results_summary}
\end{table}

In addition to the isomorphic instances, we also created non-isomorphic instances by flipping random edges of the graphs.
The results are summarized in Table~\ref{tab:non_isomorphic_results_summary}.
Our proposed method based outperforms the MIP feasibility approach.
In particular, we observe that the presolving method based on substructure detection determines non-isomorphism at the root node for all instances.
We highlight however that having identical spectra is a necessary condition for isomorphism and is not respected by these graphs.
Spectral analysis would therefore also discard them as non-isomorphic.
Investigating the  performance of optimization-based methods on isospectral but non-isomorphic graphs is a promising direction for future work.

Finally, we performed preliminary experiments for the graph matching problem, in which one seeks permutation that minimizes the distance between two graphs.
Most of our preprocessing and early-stopping steps are not directly applicable in this setting and are deactivated.
Thus, the integer linear formulation minimizing the $\ell_1$-norm of the mismatch as the objective exhibits a better performance than Boscia.
This warrants further investigation, in particular with regards to preprocessing techniques.

\section{Conclusion}

In this paper, we develop and evaluate mixed-integer convex formulations and algorithms for the graph isomorphism problem.
Despite the problem being well-studied on theoretical and algorithmic aspects, we showed that a mixed-integer convex approach leveraging 
first-order methods and graph-specific presolving techniques could achieve great performance.

From the experiments, we can conclude that optimization-based methods benefit from a high degree of symmetry as it 
increases the number of solutions.  
To effectively solve GI for asymmetric graphs, presolving techniques that detect substructures and infer variable
fixings from them are crucial. We hope that this insight will provide guidance for the design of specialized GI detection solvers.

The fundamental operations performed in Boscia, namely gradient computation and calls to the Hungarian algorithm, could both be ported to GPU for increased performance, unlike solving methods based on constraint programming or linear optimization, offering a promising avenue for future research.

Considering the encouraging performance of our approach, we plan to extend our work to the graph matching problem,
with a particular focus on preprocessing techniques.

\clearpage
\begin{credits}
\subsubsection{\ackname} Research reported in this paper was partially supported through the Research Campus
Modal funded by the German Federal Ministry of Education
and Research (fund numbers 05M14ZAM,05M20ZBM), the Deutsche Forschungsgemeinschaft (DFG) through the DFG Cluster of
Excellence MATH+ (EXC-2046/1, project ID 390685689, project AA3-15), and the ANR through MIAI Cluster (reference ANR-23-IACL-0006).
The authors also thank Pascal Schweitzer for insightful discussions on GI at SEA 2024 motivating parts of this work.

\subsubsection{\discintname}
The authors have no competing interests to declare that are
relevant to the content of this article.
\end{credits}
%
% ---- Bibliography ----
%
% BibTeX users should specify bibliography style 'splncs04'.
% References will then be sorted and formatted in the correct style.
%
\bibliographystyle{splncs04}
\bibliography{references}

% ---- Appendix ----
\appendix
\section{Cospectral Non-Isomorphic Graphs results}
\label{app:additional-results}

\begin{table}[h]
    \centering
    \Large
    \renewcommand{\arraystretch}{1.40}
    \resizebox{\textwidth}{!}{%
    \begin{tabular}{l ccc ccc ccc ccc ccc}
    \toprule
    Family & \multicolumn{3}{c}{Boscia DFS} & \multicolumn{3}{c}{Boscia Clique} & \multicolumn{3}{c}{Boscia Star} & \multicolumn{3}{c}{Boscia Fixings} & \multicolumn{3}{c}{MIP} \\
    \cmidrule(lr){2-4} \cmidrule(lr){5-7} \cmidrule(lr){8-10} \cmidrule(lr){11-13} \cmidrule(lr){14-16}
    & \# & \% & Time (s) & \# & \% & Time (s) & \# & \% & Time (s) & \# & \% & Time (s) & \# & \% & Time (s) \\
    \midrule
    
    SRG29 & 0 & 0\% & 3600.000 & 45 & \textbf{100\%} & \textbf{0.002} & 45 & \textbf{100\%} & 0.017 & 45 & \textbf{100\%} & 417.002 & 45 & \textbf{100\%} & 112.987 \\
    SRG35 & 0 & 0\% & 3600.000 & 45 & \textbf{100\%} & \textbf{0.003} & 45 & \textbf{100\%} & 0.024 & 45 & \textbf{100\%} & 885.617 & 45 & \textbf{100\%} & 419.530 \\
    SRG36 & 0 & 0\% & 3600.000 & 45 & \textbf{100\%} & \textbf{0.004} & 45 & \textbf{100\%} & 0.019 & 45 & \textbf{100\%} & 973.940 & 45 & \textbf{100\%} & 1200.798 \\
    SRG37 & 0 & 0\% & 3600.000 & 45 & \textbf{100\%} & \textbf{0.004} & 45 & \textbf{100\%} & 0.037 & 42 & 93.3\% & 1590.834 & 45 & \textbf{100\%} & 1213.421 \\
    SRG40 & 0 & 0\% & 3600.000 & 45 & \textbf{100\%} & \textbf{0.002} & 45 & \textbf{100\%} & 0.024 & 45 & \textbf{100\%} & 1984.101 & 45 & \textbf{100\%} & 45.586 \\
    \midrule
    HIG11 & 0 & 0\% & 3600.000 & 45 & \textbf{100\%} & \textbf{0.003} & 45 & \textbf{100\%} & 0.019 & 45 & \textbf{100\%} & 977.676 & 45 & \textbf{100\%} & 822.373 \\
    HIG12 & 0 & 0\% & 3600.000 & 45 & \textbf{100\%} & \textbf{0.004} & 45 & \textbf{100\%} & 0.019 & 45 & \textbf{100\%} & 945.132 & 45 & \textbf{100\%} & 839.970 \\
    HIG13 & 0 & 0\% & 3600.000 & 45 & \textbf{100\%} & \textbf{0.003} & 45 & \textbf{100\%} & 0.019 & 45 & \textbf{100\%} & 961.563 & 45 & \textbf{100\%} & 743.829 \\
    HIG14 & 0 & 0\% & 3600.000 & 45 & \textbf{100\%} & \textbf{0.002} & 45 & \textbf{100\%} & 0.021 & 45 & \textbf{100\%} & 1627.581 & 45 & \textbf{100\%} & 736.427 \\
    HIG15 & 0 & 0\% & 3600.000 & 45 & \textbf{100\%} & \textbf{0.004} & 45 & \textbf{100\%} & 0.017 & 45 & \textbf{100\%} & 973.605 & 45 & \textbf{100\%} & 686.387 \\
    \bottomrule
    \end{tabular}
    }
    \caption{Performance summary on cospectral non-isomorphic graph instances by graph family.
    SRG refers to Strongly Regular Graphs and HIG to Highly Irregular Graphs, with the numerical suffix indicating the number of nodes. Each family contains of 45 instances.}
    \label{tab:cospectral_results}
    \end{table}

\end{document}